\theoremstyle{plain}
\newtheorem{theorem}{Theorem}[section]
\newtheorem{proposition}[theorem]{Proposition}
\newtheorem{lemma}[theorem]{Lemma}
\newtheorem{claim}[theorem]{Claim}
\newtheorem{conjecture}[theorem]{Conjecture}
\theoremstyle{definition}
\theoremstyle{remark}
\newcommand{\G}{\Gamma}
\title{\bf A spectral characterization of the {\LARGE $s$}-clique extension of the square grid graphs}
\author{Sakander Hayat$^{1}$, Jack H. Koolen$^{1,2,}$\footnote{Corresponding author}~,
Muhammad Riaz$^{1}$}
\affil{$^{1}$School of Mathematical Sciences,\\
University of Science and Technology of China (USTC),\\
Hefei, Anhui, P.R. China\vspace{0.2cm}\\
$^{2}$Wen-Tsun Wu Key Laboratory of CAS,\\
School of Mathematical Sciences,\\
University of Science and Technology of China (USTC),\\
Hefei, Anhui, 230026, P.R. China\vspace{0.2cm}\\
Email addresses:  sakander@mail.ustc.edu.cn(SH), koolen@ustc.edu.cn(JHK), muhammadriaz84@gmail.com(MR)}
\date{}
\begin{document}

\maketitle

\vspace{-1cm}
\begin{abstract}
In this paper we show that for integers $s\geq2$, $t\geq1$,
any co-edge-regular graph which is cospectral with the $s$-clique extension of the $t\times t$-grid
is the $s$-clique extension of the $t\times t$-grid, if $t$ is large enough.
Gavrilyuk and Koolen used a weaker version of this result to show that the Grassmann graph $J_q(2D,D)$
is characterized by its intersection array as a distance-regular graph, if $D$ is large enough.
\end{abstract}
\centerline{{\bf 2010 Mathematics Subject Classification:} 05C50, 05C75, 05E30}
\begin{quote}
{\bf Keywords:}
Clique extensions; Graph eigenvalues; Interlacing; Co-edge-regular graphs; Walk-regular graphs; Spectral characterizations
\end{quote}

\section{Introduction}\label{sec1}
For undefined terminologies, see the next section. For the definitions related to distance-regular graphs, see \cite{BCN89,DaKoTa}.\\

Recall that a regular graph is co-edge-regular, if there exists a constant $\mu$ such that
any two distinct and non-adjacent vertices have exactly $\mu$ common neighbors.
Our main result in this paper is as follows:
\begin{theorem}\label{main-result}
Let $\G$ be a co-edge-regular graph with spectrum
\begin{equation*}
\{\big(s(2t+1)-1\big)^{1},(st-1)^{2t},(-1)^{(s-1)(t+1)^{2}},(-s-1)^{t^2}\},
\end{equation*}
where $s\geq2$ and $t\geq1$ are integers. If $t\geq11(s+1)^3(s+2)$, then $\G$ is the $s$-clique extension
of the $(t+1)\times(t+1)$-grid.
\end{theorem}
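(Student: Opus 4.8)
I plan to reconstruct the grid geometry from a mixture of spectral, local, and clique-counting data. First I read off the global parameters from the spectrum: since the largest eigenvalue $s(2t+1)-1$ is simple, $\Gamma$ is connected and regular of valency $k=s(2t+1)-1$ on $n=s(t+1)^2$ vertices. A standard trace computation, using $\sum_x(A^2)_{xx}=nk$, $\mathbf 1^\top A^2\mathbf 1=nk^2$ and $\operatorname{tr}(A^3)=\sum_i\theta_i^3$ together with co-edge-regularity, forces the common-neighbour number of two non-adjacent vertices to be $\mu=2s$. The target local picture to aim for is that the neighbourhood $\Delta(x)$ of every vertex $x$ is the join of a clique $K_{s-1}$ with two disjoint cliques $K_{st}$; equivalently, the closed neighbourhood $N[x]$ is covered by two maximal cliques of size $s(t+1)$ meeting exactly in the $s$-clique containing $x$.

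The key algebraic tool is the matrix $P=(A+I)\big(A+(s+1)I\big)=A^2+(s+2)A+(s+1)I$. It vanishes on the eigenspaces for $-1$ and $-s-1$, so $P$ is positive semidefinite of rank $2t+1$, with diagonal entries $2s(t+1)$, entry $2s$ on non-adjacent pairs, and entry $\lambda_{xy}+s+2$ on each edge $xy$. Thus $P$ is the Gram matrix of vectors $p_x\in\mathbb R^{2t+1}$ of squared length $2s(t+1)$; Cauchy--Schwarz gives $\lambda_{xy}\le k-1$, with equality iff $p_x=p_y$, which happens iff $x$ and $y$ are adjacent twins, i.e. $N[x]=N[y]$. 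The twin relation is an equivalence whose classes are both cliques and modules, so $\Gamma$ is a generalized clique extension $H[K_{f_1},\dots,K_{f_m}]$ of its twin-reduced quotient $H$. It then remains to prove that every class has size exactly $s$ and that $H$ is the grid.

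For the clique structure I apply the Hoffman ratio bound to the complement $\overline\Gamma$ (which is $st^2$-regular with least eigenvalue $-st$); this yields $\omega(\Gamma)\le s(t+1)$. The heart of the argument is to establish, for every vertex $x$, the two-clique local structure above: interlacing bounds the eigenvalues of the induced subgraph $\Delta(x)$ (its least eigenvalue is $\ge -s-1$), and combining this with $\mu=2s$ and the clique bound shows that $\Delta(x)$ splits into two maximal cliques of size $st+s-1$ overlapping in the $(s-1)$-clique of twins of $x$. Summing over $x$, the maximal cliques of size $s(t+1)$ form two \emph{parallel classes} meeting pairwise in twin-classes, i.e. a net whose points are the twin-classes; a short degree count then forces all $f_i=s$ and $H$ to be $2t$-regular. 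This local-to-global step is the main obstacle, and it is precisely here that the hypothesis $t\ge 11(s+1)^3(s+2)$ enters: it makes the relevant interlacing and counting inequalities strict, excluding vertices with aberrant local structure and twin-classes of the wrong size.

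Finally, once $\Gamma=H[K_s]$ with $H$ on $(t+1)^2$ vertices, the spectrum of $\Gamma$ forces $H$ to have eigenvalues $2t,\ t-1,\ -2$ with multiplicities $1,\ 2t,\ t^2$; being connected and regular with three distinct eigenvalues, $H$ is strongly regular with the parameters $\big((t+1)^2,\,2t,\,t-1,\,2\big)$ of the $(t+1)\times(t+1)$-grid. By the classical characterization of the grid by these parameters (the sole exception, the Shrikhande graph at $t+1=4$, is excluded since $t$ is large), $H$ is the $(t+1)\times(t+1)$-grid, and hence $\Gamma$ is its $s$-clique extension, as claimed.
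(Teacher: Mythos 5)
Your endgame coincides with the paper's: once one knows that every vertex lies on exactly two maximal cliques (``lines'') of order $s(t+1)$ meeting in exactly $s$ vertices, the paper also passes to the quotient by the relation $N[x]=N[x']$, reads off the quotient's spectrum $\{(2t)^1,(t-1)^{2t},(-2)^{t^2}\}$, concludes it is strongly regular with parameters $\big((t+1)^2,2t,t-1,2\big)$, and invokes Shrikhande's theorem. Your opening moves also match the paper: $\mu=2s$ is determined by the spectrum, the Hoffman bound applied to the complement gives $\omega(\Gamma)\le s(t+1)$ (the paper's Lemma 2.2), and interlacing bounds cocliques in a local graph by $(s+1)^2$ (the start of the paper's Proposition 3.3).

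The gap is the sentence claiming that interlacing, $\mu=2s$ and the clique bound ``show that $\Delta(x)$ splits into two maximal cliques of size $st+s-1$ overlapping in the $(s-1)$-clique of twins of $x$.'' That assertion \emph{is} essentially the theorem, and nothing you set up delivers it. The Gram-matrix observation is decorative: $\lambda_{xy}\le k-1$ with equality iff $N[x]=N[y]$ holds trivially in any graph, and positive semidefiniteness of $P$ does not rule out that every twin class is a singleton --- the twin structure is the conclusion, not a free intermediate. Likewise, ``smallest eigenvalue of the local graph at least $-s-1$, hence small cocliques'' does not force a two-clique split; converting coclique bounds into a clique cover is exactly where quantitative work is needed. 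The paper's mechanism, absent from your sketch, is a variance identity: regularity with four distinct eigenvalues gives walk-regularity via the Hoffman polynomial, which together with co-edge-regularity pins down both $\sum_i d_i$ and $\sum_i d_i^2$ over each local graph and yields $\sum_i \big(d_i-(st+s-2)\big)^2=s^2t^2(s-1)$. From this, a coclique-decomposition edge count first produces at least two long cliques through each vertex, then the relation $\ell+m=s$ between the overlap $m$ of the two lines and the number $\ell$ of neighbours on neither. Even then the line orders are only confined to $[s(t-1)+1,\,s(t+1)]$; forcing them all to equal $s(t+1)$ (equivalently, your claim that the overlap in $\Delta(x)$ is exactly $s-1$) is a genuinely \emph{global} step: the paper double-counts (vertex, line) incidences to get $2t+2\le\delta\le 2t+6$ for the number $\delta$ of lines, and eliminating $\delta\ge 2t+3$ takes a delicate analysis (the minimal line order $s(t-1)+h$ forces $h=\tfrac{3}{2}s$, then $\tau=2t+1$ lines meet a fixed line, ending in the parity contradiction of a $(2t+1)$-regular graph on $2t+3$ vertices). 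Your proposed ``short degree count'' cannot substitute for this; as written, the central local-to-global step is asserted rather than proved.
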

\vspace{0.2cm}

For $s=2$, Yang et al. \cite{YAK17} gave a similar characterization by using Hoffman graphs.
The advantage of their method is that they do not need co-edge-regularity, but their method
seems difficult to generalize to graphs with smallest eigenvalue less than $-3$.\\

In \cite{GK2018}, Gavrilyuk \& Koolen showed that any local graph, say $\Delta$,
of a distance-regular graph $\G$ with the same intersection array as a $J_q(2D,D)$ is co-edge-regular
with constant $\mu=2q$ and it has the same spectrum as the $q$-clique extension of a
$\frac{q^{D}-1}{q-1}\times \frac{q^{D}-1}{q-1}$-grid, if $D\geq4$. So our main result shows
that any local graph $\Delta$ of $\G$ is really the $q$-clique extension of a
$\frac{q^{D}-1}{q-1}\times \frac{q^{D}-1}{q-1}$-grid, if $D$ is large enough. Gavrilyuk \& Koolen \cite{GK2018}
used the $Q$-polynomial property of the distance-regular graph $\G$ to conclude that any local graph of $\G$ is the
$q$-clique extension of a $\frac{q^{D}-1}{q-1}\times \frac{q^{D}-1}{q-1}$-grid.
Using this fact, they showed that the distance-regular graph $\G$
has to be the Grassmann graph $J_q(2D,D)$, if $D$ is large enough.\\

Another motivation comes from the lecture notes \cite{Terwilliger}. In these notes, Terwilliger
shows that any local graph of a thin $Q$-polynomial distance-regular graph is co-edge-regular
and has at most five distinct eigenvalues. So it is interesting to study co-edge-regular graphs
with a few distinct eigenvalues.\\

A further motivation is as follows.
Strongly regular graphs have attracted a lot of attention, see for example
the survey papers \cite{BV84,Seidel79}. On the other hand, there are only a few papers
on regular graphs with four distinct eigenvalues, see for example \cite{vD95,vD99,VS98,HH17}.
We think that connected co-edge-regular graphs with four distinct
eigenvalues is an interesting class of graphs as they are related to association schemes with
three classes. We believe that some strong results can be obtained for this class
of graphs. For example, we believe the following conjecture is true.

\vspace{0.1cm}
\begin{conjecture}\label{conj1}
Let $\G$ be a connected co-edge-regular graph with four distinct eigenvalues. Let
$t\geq2$ be an integer and $|V(\G)|=n(\G)$. Then there exists a constant $n_{t}$ such that, if $\theta_{\min}(\G)\geq -t$
and $n(\G)\geq n_t$ both hold, then $\G$ is the $s$-clique extension of a strongly regular graph, for some $2\leq s\leq t-1$.
\end{conjecture}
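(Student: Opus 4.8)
The target statement is a reformulation problem in disguise, so I would begin by making the goal intrinsic. Recall that $\Gamma$ is the $s$-clique extension of a graph $H$ precisely when the \emph{twin relation} --- put $u \approx v$ iff $u=v$, or else $u\sim v$ and $u,v$ have exactly the same neighbours outside $\{u,v\}$ --- is an equivalence relation all of whose classes are cliques of one common size $s$, with $H \cong \Gamma/\!\approx$. The point of this reformulation is that the final step then becomes routine: the quotient of a connected graph is connected, the quotient of a regular graph by uniform twin-cliques is regular, and the spectrum of $\Gamma$ is $\{s(\lambda+1)-1 : \lambda \in \mathrm{Spec}(H)\}$ together with the eigenvalue $-1$, the latter supported on differences within twin-cliques and carrying multiplicity $(s-1)\,|V(H)|$. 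Once the twin structure is in place and the multiplicity of $-1$ in $\Gamma$ is matched to $(s-1)|V(H)|$, the quotient $H$ is forced to have exactly three distinct eigenvalues (none equal to $-1$), so by the classical theorem that a connected regular graph with three distinct eigenvalues is strongly regular, $H$ is strongly regular and we are done. Thus the whole problem reduces to (a) showing $-1$ is an eigenvalue of $\Gamma$, and (b) showing the twin relation partitions $V(\Gamma)$ into cliques of a single common size $s$.

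The numerology and the range $2\le s\le t-1$ I would extract first, assuming the structure of (b). If $\Gamma$ is the $s$-clique extension of a strongly regular $H$ with smallest eigenvalue $\ell$, then $\theta_{\min}(\Gamma)=s(\ell+1)-1$. For a non-trivial strongly regular graph we have $\ell\le -2$, the finitely many conference-type exceptions with $-2<\ell<-1$ being excluded once $n(\Gamma)\ge n_t$ is large. Hence $\theta_{\min}(\Gamma)\le -s-1$, and combined with the hypothesis $\theta_{\min}(\Gamma)\ge -t$ this gives $-t\le -s-1$, i.e. $s\le t-1$. On the other side, $s=1$ would make $\Gamma=H$ strongly regular with only three distinct eigenvalues, contrary to hypothesis, so $s\ge 2$. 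This is exactly the place where the assumption of four (rather than three) distinct eigenvalues and the lower bound $\theta_{\min}\ge -t$ are consumed.

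The substance of the argument is step (b): producing the uniform twin-clique partition from spectral and co-edge-regularity data together with $n(\Gamma)\ge n_t$. Here I would mirror the strategy behind Theorem~\ref{main-result} and the $s=2$ treatment of \cite{YAK17}. The plan is (i) to use $\theta_{\min}\ge -t$ to realise $\Gamma$ as the slim graph of a fat Hoffman graph with smallest eigenvalue at least $-t$, in which each vertex lies in a bounded number (at most $t$) of maximal fat quasi-cliques; (ii) to show, via interlacing applied to the relevant quotient matrices and a count of common neighbours forced by co-edge-regularity, that each maximal quasi-clique is \emph{saturated}, i.e. consists of mutual twins, and that two such cliques either coincide or meet in a rigidly controlled pattern; and (iii) to deduce that the resulting twin classes all have the same size $s$, reading $s-1$ off the local multiplicity of the eigenvalue $-1$ and matching it against the global multiplicity prescribed by the spectrum. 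The role of $n(\Gamma)\ge n_t$ is that, with the smallest eigenvalue bounded, a large number of vertices forces local neighbourhoods to repeat, i.e. forces genuine twins to appear and, through the co-edge-regularity constant $\mu$, to appear uniformly.

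The main obstacle is precisely step (b)(ii): establishing saturation of the maximal cliques for a general smallest eigenvalue $-t$, and not merely for $-t=-3$. For $s=2$ the Hoffman-graph classification used in \cite{YAK17} delivers this, but, as observed just after Theorem~\ref{main-result}, that machinery does not currently extend to smallest eigenvalue below $-3$. Consequently the decisive missing ingredient is a structure theorem for co-edge-regular graphs with four distinct eigenvalues and smallest eigenvalue $\ge -t$, asserting that once $n(\Gamma)$ is large the graph must be a clique extension --- a uniform-twin analogue, valid for arbitrary bounded smallest eigenvalue, of the rigidity argument that drives Theorem~\ref{main-result} for a single prescribed spectrum. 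Proving that rigidity in this generality, rather than for one spectrum, is the crux, and is the reason the statement is posed only as a conjecture.
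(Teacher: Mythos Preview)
The statement you are addressing is Conjecture~\ref{conj1}: the paper states it as an open problem and offers no proof whatsoever, only the remark that Theorem~\ref{main-result} ``can be seen as a first step towards'' it. There is therefore no proof in the paper to compare your proposal against.

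Your write-up is not a proof either, and you acknowledge this explicitly in your final paragraph: you outline a plausible strategy (reduce to uniform twin-cliques, quotient to a three-eigenvalue regular graph, invoke the classical characterisation of strongly regular graphs), correctly derive the bound $2\le s\le t-1$ \emph{conditionally} on the twin-clique structure, and then identify the genuine obstruction --- namely that the Hoffman-graph machinery of \cite{YAK17} is currently limited to smallest eigenvalue $\ge -3$ and does not extend to general $-t$. This diagnosis matches the paper's own comment immediately following Theorem~\ref{main-result}. So your proposal is an honest and accurate sketch of why the conjecture is hard, not a proof; since the paper contains no proof, this is the appropriate response.
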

\vspace{0.2cm}

Note that for any integer $s\geq2$, the $s$-clique
extension of a connected non-complete strongly regular graph is co-edge-regular and has four distinct
eigenvalues. The main result of this paper can be seen as a first step towards Conjecture \ref{conj1}.

\section{Preliminaries}

\subsection{Definitions}
Let $G$ be a simple connected graph on vertex set $V(G)$ with $n=|V(G)|$, edge set $E(G)$ and adjacency matrix $A$.
The eigenvalues of $G$ are the eigenvalues of $A$. Let $\theta_{0},\theta_1,\ldots,\theta_t$
be the distinct eigenvalues of $G$ and $m_i$ be the multiplicity of $\theta_i$ ($i=0,1,\ldots,t$).
Then the multiset $\{\theta_0^{m_0},\theta_1^{m_1},\ldots,\theta_t^{m_t}\}$ is called the \emph{spectrum} of $G$.
Two graphs are called \emph{cospectral}, if they have the same spectrum.\\

For two distinct vertices $x$ and $y$, we write $x\sim y$ (resp. $x\nsim y$)
if they are adjacent (resp. nonadjacent) to each other. For a vertex $x$ in $G$, we define
$N_{G}(x)=\{y\in V(G)\mid y\sim x\}$, and $N_{G}(x)$ is called the \emph{neighborhood} of $x$.
The graph induced by $N_{G}(x)$ is called the \emph{local graph} of $G$ with respect to $x$ and is denoted by $G(x)$.
We call $d_{x}:=|N_{G}(x)|$, the \emph{valency} of $x$.
We denote the number of common neighbors between two distinct vertices $x$ and $y$
by $\lambda_{x,y}$ (resp. $\mu_{x,y}$) if $x\sim y$  (resp. $x\nsim y$).\\

A regular graph with $n$ vertices and valency $k$ is called \emph{co-edge-regular} with parameters
$(n,k,\mu)$, if any two nonadjacent vertices have precisely $\mu=\mu(G)$ common neighbors.
Moreover, a co-edge-regular graph is called \emph{strongly regular} with parameters $(n,k,\lambda,\mu)$
if, in addition, any two adjacent vertices have precisely $\lambda=\lambda(G)$ common neighbors.
A graph $G$ is called \emph{walk-regular}, if for all $r$, the number of closed walks of length $r$
from a given vertex $x$ is independent of the choice of $x$. Since this number equals $A^{r}_{xx}$, it
is the same as saying that $A^{r}$ has constant diagonal for all $r$.\\

Recall that a \emph{clique} (or a complete graph) is a graph
in which every pair of vertices is adjacent. A $t$\emph{-clique} is a clique with $t$ vertices
and is denoted by $K_{t}$, where $t$ is a positive integer.
The $t\times t$\emph{-grid} is the Cartesian product $K_{t}\Box K_{t}$.
Note that the $t\times t$-grid is strongly regular with parameters $(t^2,2t-2,t-2,2)$.
The spectrum of the $t\times t$-grid is $\big\{(2t-2)^1,(t-2)^{2(t-1)},(-2)^{(t-1)^{2}}\big\}$.\\

The \emph{Kronecker product} $M_1\otimes M_2$ of two matrices $M_1$ and $M_2$
is obtained by replacing the $ij$-entry of $M_1$ by $(M_1)_{i,j}M_{2}$ for all $i$ and $j$.
Note that if $\tau$ and $\eta$ are eigenvalues of $M_1$ and $M_2$ respectively, then $\tau\eta$
is an eigenvalue of $M_1\otimes M_2$. We refer to \cite[Section 9.7]{GD01} for a detailed treatment on
the Kronecker product of matrices and graph eigenvalues.


\subsection{Clique extensions of the square grid graphs}\label{subsec2.3}
In this subsection, we define $s$-clique extensions of graphs and we will give some specific results for the
the $s$-clique extension of the square grid graphs.\\

For a positive integer $s$, the \emph{$s$-clique extension} of $G$ is the graph $\widetilde{G}$ obtained from
$G$ by replacing each vertex $x\in V(G)$ by a clique $\widetilde{X}$ with $s$ vertices, such that $\tilde{x}\sim\tilde{y}$
(for $\tilde{x}\in \widetilde{X},~\tilde{y}\in \widetilde{Y}$) in $\widetilde{G}$ if and only if
$x\sim y$ in $G$. If $\widetilde{G}$ is the $s$-clique extension of $G$, then $\widetilde{G}$ has adjacency matrix $J_{s}\otimes(A+I_n)-I_{sn}$,
where $J_s$ is the all-ones matrix of size $s$ and $I_n$ is the identity matrix of size $n$. In particular, if $G$ has spectrum
\begin{equation}\label{G-spectrum}
\{\theta_0^{m_0},\theta_1^{m_1},\ldots,\theta_t^{m_t}\},
\end{equation}
then it follows that the spectrum of $\widetilde{G}$ is
\begin{equation}\label{sclique-spectrum}
\big\{\big(s(\theta_0+1)-1\big)^{m_0}, \big(s(\theta_1+1)-1\big)^{m_1},\ldots,\big(s(\theta_t+1)-1\big)^{m_t},(-1)^{(s-1)(m_0+m_1+\ldots+m_t)}\big\}.
\end{equation}
\\
\vspace{-0.6cm}

In case that $G$ is a connected regular graph with valency $k$ and with adjacency matrix $A$ having exactly four distinct eigenvalues
$\{\theta_0=k,\theta_1,\theta_2,\theta_3\}$, then $A$ satisfies the following (see for example \cite{H63}):
\begin{equation}\label{eq-hoffman-poly}
A^{3}-\bigg(\sum\limits_{i=1}^{3}\theta_i\bigg)A^{2}+\bigg(\sum\limits_{1\leq i<j\leq3}\theta_i\theta_j\bigg)A-\theta_1\theta_2\theta_3I=
\frac{\prod_{i=1}^{3}(k-\theta_i)}{n}J.
\end{equation}
This implies that $G$ is walk-regular, as was also shown in \cite{vD95}.\\

Now we assume that $\G$ is a cospectral graph with the $s$-clique extension of $(t+1)\times(t+1)$-grid, where $s\geq2$ is an integer.
Then by (\ref{G-spectrum}) and (\ref{sclique-spectrum}), the graph $\G$ has spectrum
\begin{equation}\label{sclique-grid-spectrum}
\big\{\theta_{0}^{m_0},\theta_{1}^{m_1},\theta_{2}^{m_2},\theta_{3}^{m_3}\big\}=
\big\{\big(s(2t+1)-1\big)^{1},(st-1)^{2t},(-1)^{(s-1)(t+1)^{2}},(-s-1)^{t^2}\big\}.
\end{equation}
Using (\ref{eq-hoffman-poly}) we obtain
\begin{equation*}
A^{3}+(3+s-st)A^{2}+(3+2s-s^2t-2st)A+(1+s-s^2t-st)I=2s^2(2t+1)J.
\end{equation*}
Thus, we have
\begin{equation}\label{walk-regularity}
A^{3}_{(x,y)}=\left\{
  \begin{array}{ll}
    2s^2t^2+4s^2t-6st+s^2-3s+2, & \hbox{\textrm{if}~$x=y$;} \\
    5s^2t+2st+2s^2-2s-3-(3+s-st)\lambda_{x,y}, & \hbox{\textrm{if}~$x\sim y$;} \\
    4s^2t+2s^2-(3+s-st)\mu_{x,y}, & \hbox{\textrm{if}~$x\nsim y$.}
  \end{array}
\right.
\end{equation}
\\

Applying the Hoffman bound \cite[Lemma 9.6.2]{GD01} to the complement of $\G$, we obtain:
\begin{lemma}\label{lem2}
Let $\G$ be a graph that is cospectral with the $s$-clique extension of the $(t+1)\times(t+1)$-grid, where $s\geq2$, $t\geq1$ are integers.
For any clique $C$ of $\G$, say with order $c$, we have $c\leq st+s$.
If equality holds, then every vertex $x\in V(\G)-V(C)$ has exactly $s$ neighbors in $C$.
\end{lemma}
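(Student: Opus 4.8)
The plan is to apply the Hoffman (ratio) bound for cocliques, \cite[Lemma 9.6.2]{GD01}, to the complement graph $\bar{\G}$, exploiting that a clique in $\G$ is exactly a coclique in $\bar{\G}$. First I would record the two global parameters of $\G$ read off from the spectrum (\ref{sclique-grid-spectrum}): summing the multiplicities gives
\[
n=|V(\G)|=1+2t+(s-1)(t+1)^2+t^2=s(t+1)^2,
\]
while the valency is $k=\theta_0=s(2t+1)-1$. Hence $\bar{\G}$ is regular with valency $\bar{k}=n-1-k=st^2$.

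Next I would determine the spectrum of $\bar{\G}$. Since $\G$ is connected and regular, $\bar{\G}$ has the same eigenvectors: the all-ones vector now has eigenvalue $\bar{k}=st^2$, and each other eigenvalue $\theta_i$ of $\G$ is replaced by $-1-\theta_i$. Applying this to $\theta_1=st-1$, $\theta_2=-1$, $\theta_3=-s-1$ produces the eigenvalues $-st$, $0$, $s$, so that the smallest eigenvalue of $\bar{\G}$ is $\theta_{\min}(\bar{\G})=-st$. Now letting $C$ be a clique of $\G$ of order $c$, i.e.\ a coclique of $\bar{\G}$, the ratio bound gives
\[
c\;\leq\; n\cdot\frac{-\theta_{\min}(\bar{\G})}{\bar{k}-\theta_{\min}(\bar{\G})}
\;=\; s(t+1)^2\cdot\frac{st}{st^2+st}
\;=\; s(t+1)^2\cdot\frac{1}{t+1}
\;=\; st+s,
\]
which is the asserted bound.

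For the equality case I would invoke the standard equality condition of the ratio bound. When $c=st+s$, the characteristic vector $\chi_C$ decomposes as $\chi_C=\frac{c}{n}\mathbf{1}+w$ with $w$ lying entirely in the $\theta_{\min}(\bar{\G})$-eigenspace of $\bar{A}$, the adjacency matrix of $\bar{\G}$. Then $\bar{A}\chi_C=\theta_{\min}(\bar{\G})\chi_C+\frac{c}{n}\bigl(\bar{k}-\theta_{\min}(\bar{\G})\bigr)\mathbf{1}$, so for every vertex $x\in V(\G)-V(C)$ (where $(\chi_C)_x=0$) the number of $\bar{\G}$-neighbours of $x$ in $C$ equals the constant $\frac{c}{n}\bigl(\bar{k}-\theta_{\min}(\bar{\G})\bigr)=st$. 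Consequently $x$ has exactly $c-st=s$ neighbours in $C$ inside $\G$, as claimed. Equivalently, equality first forces the number of $\bar{\G}$-neighbours in $C$ to be constant over $x\notin C$, and a double count of the $st^2\cdot c$ edges leaving $C$ in $\bar{\G}$ then pins that constant at $st$.

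The argument is almost entirely routine once the ratio bound is in hand; the only step needing care is computing the complement spectrum correctly and identifying $-st$ as the smallest eigenvalue of $\bar{\G}$, since both the value of the bound and the equality constant $s$ hinge on that number.
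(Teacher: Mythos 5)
Your proof is correct and follows exactly the paper's route: the paper establishes this lemma precisely by applying the Hoffman bound \cite[Lemma 9.6.2]{GD01} to the complement of $\G$, leaving implicit the computations you spell out ($n=s(t+1)^2$, $\bar{k}=st^2$, $\theta_{\min}(\bar{\G})=-st$, and the tightness condition giving $st$ non-neighbours, hence $s$ neighbours, in $C$ for each outside vertex). All of your arithmetic checks out, so this is simply a fully detailed version of the paper's one-line argument.
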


\section{Lines in $\G$}\label{sec3}

At the end of Subsection \ref{subsec2.3}, we assumed that $\G$ is a graph cospectral with the $s$-clique extension of the $(t+1)\times(t+1)$-grid, where $s \geq 2$ and $t \geq 1$ are integers.
This implies that $\G$ is walk-regular.
For this section and the next three sections, we assume that $\G$ is co-edge-regular as well, i.e. there exists
exactly $\mu$ number of common neighbors between any two nonadjacent vertices of $\G$.
Note that, from the spectrum of the $s$-clique extension of the $(t+1)\times (t+1)$-grid, we obtain that $\mu=2s$.\\

Let $\G(\infty)$ be the local graph of $\G$ at vertex $\infty$. Assume that the vertices of $\G(\infty)$ have valencies
$d_1,\ldots,d_k$, where $k=s(2t+1)-1$. Then, as $\G$ is walk-regular and co-edge-regular, we obtain that
the number of walks of length two inside $\G(\infty)$ is same as in the local graph of the $s$-clique extension of
the $(t+1)\times(t+1)$-grid. Using (\ref{walk-regularity}), this implies that the sum of valencies and
the sum of square of valencies of vertices in $\G(\infty)$ are constant, and are given by the following equations, where $\varepsilon$ is the number of edges inside $\G(\infty)$.
\begin{equation}\label{local-valencies-sum}
2\varepsilon =\sum\limits_{i=1}^{k}d_i= 2st(st+2s-3)+s^{2}-3s+2,
\end{equation}
\begin{equation}\label{local-valencies-square-sum}
\sum\limits_{i=1}^{k}(d_i)^{2}= 2st(s^2t^2+4s^2t-6st+3s^2-10s+8)+s^3-5s^2+8s-4.
\end{equation}
By (\ref{local-valencies-sum}) and (\ref{local-valencies-square-sum}), we obtain the following equation.
\begin{equation}\label{primal-eqn}
\sum\limits_{i=1}^{k}\big(d_i-(st+s-2)\big)^{2}=s^2t^2(s-1).
\end{equation}
It turns out that (\ref{primal-eqn}) is of crucial importance in proving our main result.\\

It is straightforward to show the following lemma. We will use it later.
\begin{lemma}\label{integers-lemma}
Let $a,~b,~p$ and $q$ be some integers satisfying $0\leq q\leq p\leq a$ and $b\leq a$. If $p+q=a+b$, then $p^2+q^2\leq a^2+b^2$.
\end{lemma}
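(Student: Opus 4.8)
The plan is to exploit the fact that the two pairs $(p,q)$ and $(a,b)$ share the same sum, so that comparing their sums of squares reduces to comparing how far apart the entries of each pair are spread. For a fixed sum, a sum of two squares grows with the spread between the two summands, so the strategy is to show that $(a,b)$ is at least as spread out as $(p,q)$. Concretely, I would introduce the single quantity $d := a - p$ and rewrite everything in terms of it.

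First I would record the consequences of the hypotheses. Since $q \leq p \leq a$ we have $d = a - p \geq 0$, and the equation $p + q = a + b$ rearranges to $a - p = q - b$, so in fact $d = a - p = q - b \geq 0$. In particular $b = q - d \leq q \leq p \leq a$, which is consistent with the assumption $b \leq a$.

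Next I would compute the difference of the two sums of squares directly, factoring each difference of squares and using that $d$ is a common factor:
\begin{equation*}
a^2 + b^2 - p^2 - q^2 = (a^2 - p^2) - (q^2 - b^2) = (a-p)(a+p) - (q-b)(q+b) = d\big[(a+p) - (q+b)\big].
\end{equation*}
The whole purpose of introducing $d$ is that, once the two differences of squares are expanded, it emerges as a single common factor, reducing the problem to checking the sign of the bracket.

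It then remains to verify that $(a+p) - (q+b) = (a-b) + (p-q)$ is nonnegative. Here $p - q \geq 0$ is immediate from $q \leq p$, while $a - b = 2a - (p+q) \geq 0$ follows because $a \geq p$ and $a \geq q$ together give $2a \geq p + q$. Thus the right-hand side is a product of two nonnegative quantities, yielding $a^2 + b^2 - p^2 - q^2 \geq 0$, as desired. I expect no genuine obstacle in this argument; the only point requiring care is the bookkeeping of signs, in particular confirming that $d = q - b$ rather than $b - q$, so that the factorization produces the common factor $d$ with the correct sign rather than $-d$.
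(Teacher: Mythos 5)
Your proof is correct, and there is nothing in the paper to compare it against: the authors state the lemma with only the remark that it is ``straightforward to show'' and give no proof. Your argument is the natural one --- setting $d := a-p = q-b \geq 0$ and factoring $a^2+b^2-p^2-q^2 = d\big[(a-b)+(p-q)\big]$, where both factors are visibly nonnegative --- and every step checks out; note that your derivation $a-b = 2a-(p+q) \geq 0$ even makes the hypothesis $b \leq a$ redundant, and integrality is never used, so the lemma in fact holds for real numbers.
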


We call a maximal clique a line, if it contains at least
$\frac{3}{4}s(t+2)$ vertices. The following result shows the existence of lines in $\G$.
\begin{proposition}\label{prop1}
Let $\G$ be a co-edge-regular graph, that is cospectral with the $s$-clique extension of the $(t+1)\times(t+1)$-grid, where $s \geq 2$ and $t \geq 1$ are integers.
If $t\geq 11(s+1)^3(s+2)$, then any vertex in $\G$ lies on exactly two lines.
\end{proposition}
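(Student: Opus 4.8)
The plan is to localize the problem. Fix a vertex $\infty$ and study its local graph $\Delta := \Gamma(\infty)$, which has $k = s(2t+1)-1$ vertices whose valencies $d_1,\dots,d_k$ (computed inside $\Delta$) are governed by the key identity (\ref{primal-eqn}), namely $\sum_{i=1}^{k}\big(d_i-(st+s-2)\big)^2 = s^2t^2(s-1)$. A line through $\infty$ of order $\ell$ corresponds precisely to a clique of order $\ell-1$ in $\Delta$, and by Lemma \ref{lem2} every clique of $\Gamma$, hence every line, has order at most $s(t+1)$. So the proposition is equivalent to showing that $\Delta$ is covered by exactly two cliques of order at least $\tfrac34 s(t+2)-1$. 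The heuristic target is the ideal configuration, where $\Delta$ is the union of two cliques of order $st+s-1$ (the local traces of the two lines) meeting in the $s-1$ neighbors of $\infty$ that lie in its own $s$-clique; these $s-1$ shared vertices each have $\Delta$-valency $2st+s-2$ and alone account for the entire right-hand side $s^2t^2(s-1)$ of (\ref{primal-eqn}).

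First I would use (\ref{primal-eqn}) to prove a concentration statement for the valency sequence: only boundedly many vertices of $\Delta$ can be \emph{heavy}. Concretely, the number of $i$ with $d_i \ge st+s-2+\tfrac{st}{2}$ is at most $4(s-1)$ by a Chebyshev-type estimate applied to (\ref{primal-eqn}), so the overwhelming majority of vertices are \emph{light}, with valency close to $st+s-2$. Lemma \ref{integers-lemma} is the tool that controls the trade-off here: with $\sum_i d_i$ fixed by (\ref{local-valencies-sum}) and $\sum_i d_i^2$ fixed by (\ref{local-valencies-square-sum}), it pins down which valency profiles are admissible, forcing the excess valency to sit on a few heavy vertices rather than being spread out.

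Next I would build the lines from the light vertices. For a light vertex $z \in \Delta$, I would show that its neighborhood in $\Delta$ is almost complete, so that the closed neighborhood $\{z\}\cup N_\Delta(z)$ contains a clique of order at least $\tfrac34 s(t+2)-1$; the main device is co-edge-regularity with $\mu=2s$, which caps the number of non-adjacent pairs $\{a,b\}\subseteq N_\Delta(z)$ (each such pair already has $\infty$ and $z$ among its exactly $2s$ common neighbors), and this combines with the valency bound on $z$ to squeeze out the non-edges. Declaring the resulting maximal cliques to be the lines through $\infty$, I would then check that every vertex of $\Delta$ lies on at least one of them and that two distinct lines can meet only in a set of heavy vertices.

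Finally, to pin the count to exactly two, I would relate the squared-excess sum in (\ref{primal-eqn}) to the line-covering multiplicities. If $\infty$ lay on $m\ge 3$ lines, a positive proportion of the vertices of $\Delta$ would belong to at least two lines and hence be heavy, and a rearrangement argument based on Lemma \ref{integers-lemma} shows that this inflates $\sum_i\big(d_i-(st+s-2)\big)^2$ strictly beyond $s^2t^2(s-1)$, contradicting (\ref{primal-eqn}); on the other hand $m\le 1$ is impossible, since a single clique has at most $s(t+1)-1$ vertices while $\Delta$ has $k=2st+s-1$. Hence $m=2$. I expect the main obstacle to be the middle step: rigorously upgrading ``$z$ has few non-neighbors in $\Delta$'' to ``$z$ lies in a genuine clique of the required order,'' i.e. showing that collinearity behaves transitively enough to assemble these near-cliques into honest maximal cliques, and then verifying that all the accumulated error terms fit comfortably under the explicit threshold $t\ge 11(s+1)^3(s+2)$.
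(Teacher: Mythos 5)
Your plan is genuinely different in architecture from the paper's proof, and it is worth noting what the paper does instead before pointing at the gap. The paper never uses (\ref{primal-eqn}) in this proposition (that identity is deployed only later, in Lemma \ref{l+m=s-1}). Its key device is spectral: for a maximum coclique $P=\{x_1,\ldots,x_p\}$ of $\G(\infty)$, the set $\{\infty\}\cup P$ induces a star $K_{1,p}$ with smallest eigenvalue $-\sqrt{p}$, so interlacing against $\theta_{\min}=-s-1$ gives $p\leq(s+1)^2$; maximality of $P$ then forces each set $A_i$ (vertices whose unique $P$-neighbor is $x_i$, together with $x_i$) to be a clique, so $\G(\infty)$ is covered by at most $(s+1)^2$ cliques plus an exceptional set $B$ of size at most $s^2(s+1)^2(s+2)$ controlled by $\mu=2s$. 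A plain edge count — the lower bound $2\varepsilon\geq 2s^2t^2$ from (\ref{local-valencies-sum}) against the upper bound (\ref{epsilon}) sharpened by Lemma \ref{integers-lemma} — then forces at least two of the $A_i$ to be large, i.e.\ at least two lines through $\infty$; and ``at most two'' follows because any two lines meet in at most $\mu=2s$ vertices while three lines would overfill the neighborhood, as $3\cdot\frac34 s(t+2)-6s>k$. Nothing in your proposal plays the role of the interlacing bound $p\leq(s+1)^2$, which is what caps the number of candidate cliques covering the local graph.

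The genuine gap is your middle step, and your stated mechanism for it fails. The parenthetical justification you give — each nonadjacent pair $\{a,b\}\subseteq N_{\Delta}(z)$ has $\infty$ and $z$ among its exactly $2s$ common neighbors — caps the \emph{codegree} of each such pair, not the \emph{number} of such pairs, so it does not by itself bound the non-edges inside $N_{\Delta}(z)$. The usable consequence of $\mu=2s$ is that nonadjacent $a,b$ in $\G(\infty)$ have at most $2s-1$ common neighbors inside $\G(\infty)$; converting that into ``few non-edges'' requires a path-counting argument (comparing $\sum_i\binom{d_i}{2}$ with the codegree cap) that you have not supplied. Even granting few non-edges, that alone does not yield a clique of order $\frac34 s(t+2)-1$: the cocktail-party graph $K_{m\times2}$ has a single non-neighbor per vertex yet clique number only half its order, so the codegree bound must be invoked a second time to exclude such configurations, and assembling the resulting near-cliques into honest maximal cliques that \emph{cover} $\Delta$ is exactly the hard part you flag yourself. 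This covering property is also silently assumed in your exclusion of $m\leq 1$ (``a single clique cannot cover $\Delta$''), whereas the paper obtains ``at least two lines'' from the edge-count contradiction (\ref{new}) without any covering statement. Your $m\geq3$ exclusion via heavy vertices is fine in spirit (vertices on two large lines are heavy, and (\ref{primal-eqn}) bounds the heavy count), but as it stands the proposal has a real hole at its pivot, and the interlacing bound on the coclique number is the missing idea that the paper uses to bridge it.
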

\begin{proof}
Note that $\G$ is regular with valency $k=s(2t+1)-1$, and co-edge-regular with constant $\mu=2s$.
Let $\infty$ be a vertex of $\G$ and $\G(\infty)$ be its local graph. As before let $\varepsilon$ be the number of edges inside $\G(\infty)$.
We first obtain a lower bound on $\varepsilon$.
By (\ref{local-valencies-sum}) we obtain
\begin{equation}\label{ineq-bounds}
2\varepsilon \geq 2s^2t^2,
\end{equation}as $s \geq 2$.
Now we derive an upper bound on $\varepsilon$.
Let $P$
be a coclique with maximum order in $\G(\infty)$ with vertex set $\{x_1,x_2,\ldots,x_p\}$.
Then by the interlacing theorem \cite[Theorem 9.5.1]{GD01}, we have $p\leq (s+1)^{2}$, as $\theta_{\min}=-s-1$,
where $\theta_{\min}$ is the smallest eigenvalue of $\G$.
We define
\begin{center}
$B:=\{y\sim\infty\mid$ $y$ has at least two neighbors in $P\}$.
\end{center}
Let $b$ be the cardinality of $B$. Then
\begin{equation}\label{b-eqn}
b\leq (2s-1){p\choose 2}\leq sp(p-1)\leq s^{2}(s+1)^{2}(s+2),
\end{equation}
as $\mu=2s$. We also define the sets $A_i$ such that
\begin{center}
$A_i:=\{y\sim\infty\mid$ $y$ has only $x_i$ as its neighbor in $P\}\cup\{x_{i}\}$,
\end{center}
where $a_i:=|A_i|$. Note that, if $\mathcal{A}:=\bigcup\limits_{i}A_i$, then $\mathcal{A}\bigcup B=V\big(\G(\infty)\big)$.
Note further that any two vertices in $A_i$ are adjacent, because $P$ is maximum.
Thus, for every $i$, $A_i$ is a clique in $\G(\infty)$.
\\

Now, inside $\mathcal{A}$, we have at most $\frac{1}{2}\big(\sum\limits_{i}a_i(a_{i}-1)+(p-1)(2s-1)\sum\limits_{i}a_i\big)$ edges,
inside $B$, we have at most $\frac{1}{2}b(b-1)$ edges and between $\mathcal{A}$ and $B$, we have at most $b(k-b)$ edges.
Then, by counting these number of edges in $\G(\infty)$, we obtain
\begin{eqnarray}\label{epsilon}
\nonumber 2\varepsilon&\leq&\sum\limits_{i}a_i(a_{i}-1)+b(b-1)+2(k-b)b+(p-1)(2s-1)\sum\limits_{i}a_{i}\\
\nonumber &\leq&\sum\limits_{i}a_i(a_{i}-1)+2(k-1)b+(p-1)(2s-1)\sum\limits_{i}a_{i}\\
\nonumber &\leq&\sum\limits_{i}a_i(a_{i}-1)+2(k-1)s^{2}(s+1)^{2}(s+2)+k(s^2+2s)(2s-1)~~~~~~~~~~~~\big(\textrm{as}~~\sum\limits_{i}a_{i}\leq k\big)\\
\nonumber &\leq&\sum\limits_{i}a_i(a_{i}-1)+4s^{3}(s+1)^{2}(s+2)(t+1)+4s^2(s+1)^2(t+1)~~~\big(\textrm{as}~~k=s(2t+1)-1\leq2s(t+1)\big)\\
&\leq&\sum\limits_{i}a_i(a_{i}-1)+4s^2(s+1)^3(s+2)(t+1).
\end{eqnarray}
Without loss of generality, we may assume that $a_1\geq a_2\geq\ldots\geq a_p\geq1$. By Lemma \ref{lem2},
we have $a_1< s(t+1)$. Assume that $\G$ has at most one line through $\infty$. Then
$a_1<s(t+1)$, $a_2\leq \frac{3}{4}s(t+2)$ and $\sum\limits_{i}a_{i}\leq k < 2s(t+1).$ Using Lemma \ref{integers-lemma}, we obtain
\begin{equation}\label{upper-bound-ai}
\sum\limits_{i}a_i(a_{i}-1)< \sum\limits_{i}a_i^2< \big(s(t+1)\big)^2+\bigg(\frac{3}{4}s(t+2)\bigg)^2+\bigg(\frac{1}{4}s(t+2)\bigg)^2 <\frac{13}{8}\big(s(t+2)\big)^2 .
\end{equation}
Combining (\ref{ineq-bounds}), (\ref{epsilon}) and (\ref{upper-bound-ai}), we obtain
\begin{equation}\label{new}
2s^2t^2 \leq 2 \varepsilon \leq \frac{13}{8}\big(s(t+2)\big)^2 +4s^2(s+1)^3(s+2)(t+1).
\end{equation}
As $$t \geq11(s+1)^3(s+2)> 600, \ \ \ \ (s \geq 2)$$
we find $t+2<\frac{201}{200}t$. Putting this in (\ref{new}), we obtain a contradiction.\\

\noindent Thus, if $t\geq11(s+1)^3(s+2)$, we obtain that every vertex
in $\G$ lies on at least two lines. Let $C_1$ and $C_2$ be these two lines and $m:=| V(C_1)\bigcap V(C_2)|$. Since $\G$
is co-edge-regular with $\mu=2s$, we obtain that $m\leq 2s$. As $3\times \frac{3}{4}s(t+2)-6s > 2s(t+1) > k$, we find that every vertex
in $\G$ lies on at most two lines. This shows the proposition.
\end{proof}

Now we prove the following property for lines through a vertex.
\begin{lemma}\label{l+m=s-1}
Let $\G$ be a co-edge-regular graph, that is cospectral with the $s$-clique extension of the $(t+1)\times(t+1)$-grid,
where $s \geq 2$ and $t \geq 1$ are integers.
Let $\infty$ be a vertex of $\G$. Let $C_1$ and $C_2$ be the two lines through $\infty$. Let $\ell:=|\{y\mid y\notin V(C_1)\bigcup V(C_2) \}|$
and $m:=| V(C_1)\bigcap V(C_2)|$. If $t\geq11(s+1)^3(s+2)$, then $\ell+m=s$, and both $C_1$ and $C_2$ have at least $s(t-1)+1$ vertices and at most
$s(t+1)$ vertices.
\end{lemma}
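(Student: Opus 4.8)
The plan is to reduce the whole statement to the single identity $\ell+m=s$, and then to extract that identity from the ``primal'' equation (\ref{primal-eqn}). First I would set up the bookkeeping by partitioning the local graph $\G(\infty)$ into four parts: the centre $Z:=(V(C_1)\cap V(C_2))\setminus\{\infty\}$ with $|Z|=m-1$, the row part $R:=V(C_1)\setminus V(C_2)$ with $|R|=c_1-m$, the column part $K:=V(C_2)\setminus V(C_1)$ with $|K|=c_2-m$, and the remainder $L$, the neighbours of $\infty$ on neither line, with $|L|=\ell$ (writing $c_i:=|V(C_i)|$). Counting $|N_\G(\infty)|=k=s(2t+1)-1$ through this partition yields the exact identity $c_1+c_2-m+\ell=s(2t+1)$, so that $\ell+m=s$ is equivalent to $c_1+c_2-2m=2st$. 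Since Lemma~\ref{lem2} already gives $c_i\le s(t+1)$, and since $\ell\ge0$, $m\ge1$, the relation $\ell+m=s$ forces $\alpha+\beta=2\ell$, where $\alpha:=s(t+1)-c_1\ge0$ and $\beta:=s(t+1)-c_2\ge0$; as $\ell\le s-1$ this gives $\alpha,\beta\le 2(s-1)$ and $c_i\ge s(t-1)+2\ge s(t-1)+1$. Hence all three assertions of the lemma follow once $\ell+m=s$ is known, and this is the only thing left to prove.

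The tool I would isolate next is a clique bound from co-edge-regularity: any vertex $u$ outside a maximal clique $C$ of order $>2s$ has at most $\mu=2s$ neighbours in $C$ (choose $x\in V(C)$ with $x\nsim u$; then $N(u)\cap V(C)\subseteq N(x)\cap N(u)$, so $|N(u)\cap V(C)|\le\mu=2s$). Applied to $C_1$ and $C_2$ this controls all cross-adjacencies: each $u\in K$ has at most $2s-m$ neighbours in $R$ and each $w\in L$ has at most $2s-1$ neighbours in $Z\cup R$ and in $Z\cup K$. Using these I compute the local valencies relative to $a:=st+s-2$: a centre vertex satisfies $d_v-a=st-\ell+z_v\ge st-\ell$ with $z_v:=|N(v)\cap L|$; a vertex of $L$ satisfies $d_w\le(4s-2)+|N(w)\cap L|$, so $a-d_w$ is large; and the row/column valencies deviate from $a$ by only $O(s)$.

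The heart of the proof is then a two-stage use of (\ref{primal-eqn}), i.e.\ $\sum_v(d_v-a)^2=s^2t^2(s-1)$. Bounding the left side below by the centre and $L$ contributions alone, and using the crude estimate $\ell\le\tfrac12 st$ (from the line-size bounds and $m\le 2s$), gives roughly $(\ell+m-1)(\tfrac12 st-3s)^2\le s^2t^2(s-1)$, which already forces $\ell+m=O(s)$; in particular $\ell,m,\alpha,\beta$ become bounded by an absolute multiple of $s$, independent of $t$. Once the parameters are $O(s)$, the centre contributes $(m-1+o(1))s^2t^2$, the set $L$ contributes $(\ell+o(1))s^2t^2$, and the row and column parts contribute only $O(s^3t)$. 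Thus the exact value $s^2t^2(s-1)$ agrees with $(\ell+m-1)s^2t^2$ up to an $O(s^3t)$ error: lower-bounding the primal sum by $(\ell+m-1)(st-8s)^2$ gives $\ell+m\le s$, while upper-bounding it by $(\ell+m-1)s^2t^2+Cs^3t$ shows that $\ell+m\le s-1$ would force $s^2t^2(s-1)<s^2t^2(s-1)$, a contradiction once $t>Cs$. Since $\ell+m$ is an integer and $t\ge 11(s+1)^3(s+2)\gg s$, these two estimates pin it to $\ell+m=s$.

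The step I expect to be the main obstacle is gaining control of the set $L$ and of the $R$--$K$ cross-edges, which vanish in the model graph but are here only bounded \emph{a priori}. The crucial manoeuvre is the bootstrap: one must first show $\ell,m=O(s)$, so that each centre vertex and each vertex of $L$ contributes a clean $\approx s^2t^2$ to the quadratic sum while $R$ and $K$ remain lower order, and only afterwards can one combine the largeness of $t$ with the integrality of $\ell+m$ to upgrade the approximate equality $\ell+m\approx s$ to the exact identity $\ell+m=s$.
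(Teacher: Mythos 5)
Your proposal is correct and is essentially the paper's own proof: the same decomposition of $\G(\infty)$ into intersection, symmetric-difference and outside parts, the same $\mu=2s$ bound on neighbours outside a maximal clique, and the same two-pass use of (\ref{primal-eqn}) --- a crude first pass (using the definitional lower bound $\tfrac{3}{4}s(t+2)$ on lines to get $\ell\le\tfrac{1}{2}st$ and hence $\ell,m=O(s)$, matching the paper's $\ell\le 5s$), followed by the sandwich $(\ell+m-1)(st-8s)^2\le s^2t^2(s-1)\le(\ell+m-1)(st+s)^2+O(s^3t)$ and integrality to force $\ell+m=s$. Your deduction of $s(t-1)+1\le c_i\le s(t+1)$ from the counting identity $c_1+c_2-m+\ell=s(2t+1)$ likewise matches the paper's concluding step.
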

\begin{proof}
Let $\G(\infty)$ be the local graph corresponding to vertex $\infty$ of $\G$, with vertex set $\{ x_1, x_2, \ldots , x_k\}$. For $1\leq i\leq k$, let $d_{i}$ be the valency of vertex in $\G(\infty)$.
Without loss of generality, we may assume that $\{ x_1, x_2, \ldots, x_{\ell}\}$ are the vertices in $\G$ which do not lie in either $C_1$ or $C_2$.
Note that for $x_i\notin V(C_1)\bigcup V(C_2)$, we have $d_{i}\leq (6s-3)+\frac{1}{2}s(t+1)$. This, in turn,  implies, using
(\ref{primal-eqn})

\begin{eqnarray}\label{Eqz2}
\nonumber s^{2}t^{2}(s-1) =\sum\limits_{i=1}^{k}\big(d_{i}-(st+s-2)\big)^2&\geq&\sum\limits_{y\notin V(C_1)\bigcup V(C_2)}\big(d_{y}-(st+s-2)\big)^2\\
\nonumber&\geq&\ell\bigg(\frac{1}{2}s(t+1)-6s\bigg)^2\\
&=&\ell\bigg(\frac{1}{4}\big(s(t-11)\big)^2\bigg).
\end{eqnarray}
Hence we find that
\begin{equation}\label{claimh1}
\ell\leq 5s,
\end{equation}
as $t\geq11(s+1)^{3}(s+2)>100$.\\

By (\ref{claimh1}), it follows that
\begin{eqnarray}\label{jack1}
\nonumber s(t+1)\geq|V(C_j)|&\geq& 1+ k - s(t+1)-  \ell \\
\nonumber &\geq& 2s(t+1) -s(t+1) - 5 s\\
&=&st-5s
\end{eqnarray}
for $j=1,2$.

Now we are going to refine the estimates for the number $\varepsilon$  of edges in $\G(\infty)$. This will give the following claim.

\begin{claim}\label{claim2}
$\ell+m=s$.
\end{claim}
\noindent{\bf Proof of Claim \ref{claim2}.}
Let $X:=V(C_1)\bigcap V(C_2)-\{\infty\}$, $Y:=V(C_1)\triangle V(C_2)$, and $Z:=\G(\infty)-V(C_1)\bigcup V(C_2)$. For $u \in X \bigcup Y \bigcup Z$, we will write $d_u$ for the valency of $u$ inside $\G(\infty)$. So we need to estimate $|X| + |Z| = \ell + m -1$.

\noindent For $x\in X$, we have
\begin{equation*}\label{sakka2}
2s(t+1)-5s\leq d_{x}\leq k-1=s(2t+1)-2.
\end{equation*}
\noindent Now, for $y\in Y$, (\ref{jack1}) and $\mu = 2s$ give us
\begin{equation*}\label{sakka4}
     st - 5s -1 \leq   d_y \leq s(t+1) -2 + 2s-1.
\end{equation*}
\noindent Moreover, for $z \in Z$, we have
\begin{equation*}\label{sakka1}
d_{z}\leq 2s-1+2s-1+\ell-1\leq 9s-2.
\end{equation*}
\noindent So, for $u\in X\bigcup Z$, we have
\begin{equation*}\label{sakka3}
st-8s\leq d_{u}-st- s+2 \leq st +1,
\end{equation*}
and for $y \in Y$ we have
\begin{equation*}\label{sakka}
|d_y - st-s +2| \leq 6s.
\end{equation*}

\noindent By using the above expressions and (\ref{primal-eqn}), we obtain
\begin{eqnarray}\label{sakka5}
\nonumber (\ell+m-1)\big(st-8s\big)^2 &\leq& \sum\limits_{v \in X \cup Z}\big(d_{v}-(st+s-2)\big)^{2}\\
\nonumber &\leq & \sum\limits_{v\sim \infty}\big(d_{v}-(st+s-2)\big)^{2}=s^2t^2(s-1)\\
&\leq &(\ell+m-1)(st+s)^2+2st(6s)^2.
\end{eqnarray}
By simplifying (\ref{sakka5}), we obtain
\begin{equation}\label{sakka6}
\frac{t^2(s-1)-72st}{(t+1)^2}\leq\ell+m-1\leq\frac{s^2t^2(s-1)}{\big(s(t-8)\big)^2}.
\end{equation}
If $t\geq11(s+1)^3(s+2)> 200$, then (\ref{sakka6}) implies that
\begin{equation*}
s-2<\ell+m-1 <s.
\end{equation*}
This shows that $\ell+m=s$.
\qed
\\

As, by Proposition \ref{prop1} we have that every vertex lies in two lines, we obtain using Claim \ref{claim2} that
for a line $C$ in $\G$ with order $c$, we have $s(t-1)+1\leq c\leq s(t+1)$, if $t\geq11(s+1)^3(s+2)$.
This shows the lemma.
\end{proof}

As a consequence of Lemma \ref{l+m=s-1}, we show the following result.
\begin{lemma}\label{c1+c2=2m+2st}
Let $\G$ be a co-edge-regular graph, that is cospectral with the $s$-clique extension of the $(t+1)\times(t+1)$-grid, where $s\geq2$ and $t\geq1$ are integers.
Let $C_1$ and $C_2$ be two lines with respective order $c_1$ and $c_2$. Assume that $C_1$ and $C_2$ intersect in exactly
$m$ vertices, where $m\geq1$. If $t\geq11(s+1)^3(s+2)$, then $c_{1}+c_{2}=2st+2m$.
\end{lemma}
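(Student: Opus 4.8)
The plan is to fix a vertex $\infty\in V(C_1)\cap V(C_2)$ and apply Lemma \ref{l+m=s-1} at $\infty$. Write $c_1=|V(C_1)|$, $c_2=|V(C_2)|$, $m=|V(C_1)\cap V(C_2)|$, and let $\ell$ be the number of neighbors of $\infty$ lying outside $C_1\cup C_2$. Since both $C_1$ and $C_2$ pass through $\infty$, they are among the (exactly two, by Proposition \ref{prop1}) lines through $\infty$, so Lemma \ref{l+m=s-1} applies directly and gives $\ell+m=s$. The idea is then to count the neighbors of $\infty$ in two different ways and solve for $c_1+c_2$.

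First I would count the valency $k=s(2t+1)-1$ of $\infty$ by partitioning $N_\G(\infty)$ according to membership in the lines. The vertices of $C_1\setminus\{\infty\}$ contribute $c_1-1$, those of $C_2\setminus\{\infty\}$ contribute $c_2-1$, but the $m-1$ vertices in $(V(C_1)\cap V(C_2))\setminus\{\infty\}$ have been counted twice; the remaining neighbors are exactly the $\ell$ vertices outside $C_1\cup C_2$. Since $C_1$ and $C_2$ are maximal cliques meeting in $m$ vertices and every vertex of $C_1\setminus\{\infty\}$ is adjacent to $\infty$ (and similarly for $C_2$), this is an exact partition of $N_\G(\infty)$. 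Hence
\begin{equation*}
k=(c_1-1)+(c_2-1)-(m-1)+\ell,
\end{equation*}
that is $k=c_1+c_2-m-1+\ell$. Substituting $k=2s(t+1)-s-1=2st+s-1$ and the relation $\ell=s-m$ from the claim, the $m$-terms combine and one solves $c_1+c_2=k+1+m-\ell=2st+s-1+1+m-(s-m)=2st+2m$, which is exactly the asserted identity.

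The only real content to verify is that the decomposition of $N_\G(\infty)$ into the four pieces $V(C_1)\cap V(C_2)\setminus\{\infty\}$, $V(C_1)\triangle V(C_2)$, and the outside set of size $\ell$ is genuinely a partition with no double-counting beyond the shared $m-1$ vertices — this uses that $C_1,C_2$ are the \emph{only} two lines through $\infty$ (Proposition \ref{prop1}) and that they are cliques, so all listed vertices are indeed neighbors of $\infty$. I expect this bookkeeping to be the main, though modest, obstacle: one must be careful that the definition of $\ell$ in Lemma \ref{l+m=s-1} (neighbors of $\infty$ outside $V(C_1)\cup V(C_2)$) matches the leftover count here, and that the intersection correction is applied exactly once. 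Once the counting identity $k+1=c_1+c_2-m+\ell$ is set up correctly, the conclusion $c_1+c_2=2st+2m$ follows by plugging in $\ell+m=s$ and $k=s(2t+1)-1$, a one-line substitution.
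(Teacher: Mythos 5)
Your proof is correct and takes essentially the same approach as the paper: fix a vertex $\infty\in V(C_1)\cap V(C_2)$ (so that $C_1,C_2$ are exactly the two lines through it), count its valency via the partition of $N_{\G}(\infty)$ into $\bigl(V(C_1)\cup V(C_2)\bigr)\setminus\{\infty\}$ and the $\ell$ remaining neighbors, and substitute $\ell+m=s$ from Lemma \ref{l+m=s-1}. The paper writes the identical counting identity in the form $(c_1-m)+(c_2-m)+(m-1)+\ell=k=s(2t+1)-1$, which is your equation $k=c_1+c_2-m-1+\ell$ rearranged.
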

\begin{proof}
Let $x\in V(C_1)\bigcap V(C_2)$. Let $\ell=|\G(x)-V(C_1)-V(C_2)|$. Then we have
\begin{equation*}
(c_1-m)+(c_2-m)+m-1+\ell=k=s(2t+1)-1.
\end{equation*}
If $t\geq11(s+1)^3(s+2)$, then, by Lemma \ref{l+m=s-1},  we have $\ell+m=s$. Hence we obtain
\begin{equation*}
c_{1}+c_{2}=2st+2m.
\end{equation*}
\end{proof}

\section{The order of lines}\label{sec4}

In this section we will show the following lemma on the order of lines.
\begin{lemma}\label{sum-of-cliques-pairs} Let $s \geq 2$ and $t \geq 1$ be integers.
Let $\G$ be a co-edge-regular graph, that is cospectral with the $s$-clique extension of the $(t+1)\times(t+1)$-grid.
 Let $q_{i}$ be the number of lines of order $s(t-1)+i$ in $\G$ where $i=1,\ldots,2s$ and $\delta = \sum\limits_{i=1}^{2s} q_i$ be the number of lines in $\Gamma$.
Assume $t\geq11(s+1)^3(s+2)$. Then
\begin{equation}\label{main-eqn}
\sum\limits_{i=1}^{2s}\big(s(t-1)+i\big)q_{i}=2s(t+1)^{2}
\end{equation} holds, and the number $\delta$ satisfies
\begin{equation}\label{total-cliques-ineq}
2t+2\leq\delta\leq2t+6,
\end{equation}
where $\delta=2t+2$ implies that $q_{i}=0$ for all $i<2s$ and $q_{2s}=2t+2$.\\

\noindent Let $\alpha := \delta - 2t -2$. Then $\alpha \in \{0, 1, 2, 3, 4\}$ and
\begin{equation}\label{eqz3}
\sum\limits_{i=1}^{2s}(2s-i)q_i =\alpha s(t+1)
\end{equation}
holds.
\end{lemma}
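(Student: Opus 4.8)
The plan is to reduce the whole lemma to two elementary global counts and then squeeze $\delta$ between the two extreme possible line orders. First I would record the inputs that come directly from the spectrum and from the earlier sections. Summing the multiplicities in (\ref{sclique-grid-spectrum}) gives the number of vertices
$n=|V(\G)|=1+2t+(s-1)(t+1)^2+t^2=s(t+1)^2$. By Proposition \ref{prop1} every vertex of $\G$ lies on \emph{exactly} two lines, and by Lemma \ref{l+m=s-1} every line has order $c$ satisfying $s(t-1)+1\le c\le s(t+1)$, so indeed every line is counted by some $q_i$ with $1\le i\le 2s$ and $\delta=\sum_{i=1}^{2s}q_i$ is the total number of lines. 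To obtain (\ref{main-eqn}) I would double count incident (vertex, line) pairs: since each vertex lies on exactly two lines, $\sum_{C}c=2n=2s(t+1)^2$, where $C$ ranges over all lines. Grouping the lines according to their order $s(t-1)+i$ rewrites the left-hand side as $\sum_{i=1}^{2s}\big(s(t-1)+i\big)q_i$, which is precisely (\ref{main-eqn}).

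Equation (\ref{eqz3}) is then pure algebra and requires no new idea. Writing $s(t-1)+i=s(t+1)-(2s-i)$ and substituting $\delta=\sum_i q_i$, equation (\ref{main-eqn}) becomes $s(t+1)\delta-\sum_i(2s-i)q_i=2s(t+1)^2$, that is, $\sum_{i=1}^{2s}(2s-i)q_i=s(t+1)(\delta-2t-2)=\alpha s(t+1)$, with $\alpha:=\delta-2t-2$ an integer since $\delta$ is.

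For the bounds (\ref{total-cliques-ineq}) I would average (\ref{main-eqn}) against the extreme line orders. Since every $c\le s(t+1)$ we get $2s(t+1)^2=\sum_C c\le \delta\, s(t+1)$, hence $\delta\ge 2t+2$; moreover equality forces every line to have the maximal order $s(t+1)=s(t-1)+2s$, i.e. $q_{2s}=2t+2$ and $q_i=0$ for $i<2s$ (equivalently $\alpha=0$ in (\ref{eqz3})), which is the stated equality case. For the upper bound, every $c\ge s(t-1)+1$ gives $\delta\big(s(t-1)+1\big)\le\sum_C c=2s(t+1)^2$, and I would verify the elementary inequality $2s(t+1)^2<(2t+7)\big(s(t-1)+1\big)$, which rearranges to $t(s+2)>9s-7$ and so holds comfortably under $t\ge 11(s+1)^3(s+2)$. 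This yields $\delta<2t+7$, and since $\delta$ is an integer, $\delta\le 2t+6$; therefore $\alpha\in\{0,1,2,3,4\}$. The one step needing genuine care is this last polynomial inequality: one must confirm that the averaging bound built from the \emph{minimum} order $s(t-1)+1$ lands strictly below $2t+7$ (rather than merely below $2t+8$), so that rounding gives exactly the claimed $\delta\le 2t+6$. Everything else is bookkeeping resting on the structural results already established in Proposition \ref{prop1} and Lemma \ref{l+m=s-1}.
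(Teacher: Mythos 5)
Your proposal is correct and follows essentially the same route as the paper: (\ref{main-eqn}) by double counting vertex--line incidences using that each vertex lies on exactly two lines, the bounds (\ref{total-cliques-ineq}) by averaging against the extreme line orders $s(t-1)+1$ and $s(t+1)$ (the paper uses the slightly cruder denominator $s(t-1)$, giving $\delta<2(t+3)+\frac{8}{t-1}$, while you verify the sharper polynomial inequality $t(s+2)>9s-7$ directly --- both yield $\delta\leq 2t+6$ for $t\geq 10$), and (\ref{eqz3}) by the same substitution $s(t-1)+i=s(t+1)-(2s-i)$. The equality case $\delta=2t+2$ forcing $q_{2s}=2t+2$ is handled identically in both arguments.
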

\begin{proof}
Assume $t\geq11(s+1)^3(s+2)$.
By Lemma \ref{l+m=s-1}, any vertex lies on exactly two lines and each line has
at least $st-s+1$ and at most $st+s$ vertices.
\noindent Now consider the set
\begin{equation*}
W=\{(x,C)\mid x\in V(C),~\mathrm{where}~C~\mathrm{is~a~line}\}.
\end{equation*}
Then, by double counting the cardinality of the set $W$, we obtain (\ref{main-eqn}).\\

\noindent By (\ref{main-eqn}), we obtain that
\begin{equation*}
\delta=\sum\limits_{i=1}^{2s}q_i<\sum\limits_{i=1}^{2s}\frac{s(t-1)+i}{s(t-1)}q_{i}=\frac{2(t+1)^2}{t-1}=2(t+3)+\frac{8}{t-1}.
\end{equation*}
Thus, if $t\geq10$, then we obtain
\begin{equation*}
\delta\leq2t+6.
\end{equation*}
In a similar fashion, we obtain
\begin{equation*}
\delta\geq\sum\limits_{i=1}^{2s}\frac{s(t-1)+i}{s(t+1)}q_{i}=2(t+1).
\end{equation*}
This shows that  $\delta\geq2t+2$  and $\delta=2t+2$
implies that all lines have order $st+s$.\\

Let $\alpha:=\delta-2t-2$. Then $\alpha\in\{0,1,2,3,4\}$. From (\ref{main-eqn}), we find
\begin{equation*}
\sum\limits_{i=1}^{2s}(2s-i)q_i =\sum\limits_{i=1}^{2s} s(t+1)q_{i} -\sum\limits_{i=1}^{2s}\big(s(t-1)+i\big)q_{i} =\big(\delta-2(t+1)\big)s(t+1) = \alpha s(t+1).
\end{equation*}
This shows (\ref{eqz3}).
This completes the proof.
\end{proof}

\section{The neighborhood of a line}
In this section we will show the following proposition.
\begin{proposition}\label{propneigh}
Let $\G$ be a graph that is cospectral with the $s$-clique extension of the $(t+1)\times(t+1)$-grid, where $s \geq 2$ and $t \geq 1$ are integers.
If  $t\geq11(s+1)^3(s+2)$, then $\G$ contains exactly $2t+2$ lines.
\end{proposition}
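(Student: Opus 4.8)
The plan is to show that the number of lines $\delta$ equals exactly $2t+2$, which by Lemma~\ref{sum-of-cliques-pairs} is the minimum possible value and forces all lines to have full order $s(t+1)$. Recall from that lemma that $\delta = 2t+2+\alpha$ with $\alpha \in \{0,1,2,3,4\}$, and that $\alpha$ satisfies the identity $\sum_{i=1}^{2s}(2s-i)q_i = \alpha s(t+1)$. Thus my entire goal reduces to proving $\alpha = 0$, equivalently that no line has order strictly less than $s(t+1)$. I would argue by contradiction, assuming $\alpha \geq 1$, and derive numerical constraints that are incompatible with the spectral data, in particular with the key equation~(\ref{primal-eqn}), $\sum_{i=1}^{k}\big(d_i-(st+s-2)\big)^2 = s^2t^2(s-1)$, which has governed every estimate so far.

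The main tool I would deploy is a global double-counting argument that tracks the intersection pattern of the $\delta$ lines. By Lemma~\ref{l+m=s-1} every vertex lies on exactly two lines, so the lines, regarded as a set system on $V(\G)$, form something very close to a dual grid structure: each vertex is the intersection point of its two lines. By Lemma~\ref{c1+c2=2m+2st}, two intersecting lines $C_1, C_2$ satisfy $c_1 + c_2 = 2st + 2m$, where $m = |V(C_1) \cap V(C_2)| \geq 1$. I would first count, in two ways, the number of ordered pairs of vertices $(x,y)$ lying together on a common line; since each vertex lies on two lines and each line $C$ contributes $|V(C)|(|V(C)|-1)$ ordered adjacent pairs internal to it, I can relate $\sum_C c(c-1)$ to the total edge count of $\G$ (which is determined by the spectrum, namely $\tfrac{1}{2}nk$ with $n = s^2(t+1)^2+st^2$ and $k = s(2t+1)-1$) after correcting for edges counted twice (those lying in line-intersections) and edges lying in no line. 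The discrepancy between the actual edge count and $\sum_C \binom{c}{2}$ is controlled by the total intersection size $\sum_{\{C,C'\}} m_{C,C'}$, which I can bound because each vertex contributes a bounded amount and $\ell + m = s$ is fixed.

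The concrete mechanism for forcing $\alpha = 0$ is to combine equation~(\ref{eqz3}) with a quadratic count. Writing $c_C = |V(C)|$ for each line, I would compute $\sum_C (s(t+1) - c_C)$, which by (\ref{eqz3}) equals $\alpha s(t+1)$ — a quantity of order $\alpha st$. On the other hand, the deficiencies $s(t+1)-c_C$ are tied, via Lemma~\ref{c1+c2=2m+2st}, to how the lines fail to pairwise intersect in the full amount $s$; summing the relation $c_1+c_2 = 2st+2m$ over all intersecting pairs through a fixed vertex and then over all vertices produces a second linear relation among the $q_i$ and the intersection multiplicities. The point is that if some line had order $< s(t+1)$, the rigidity $\ell+m=s$ propagates this deficiency to every line it meets, and since each line meets many others (roughly $2t$ of them), the total deficiency $\sum_C(s(t+1)-c_C)$ would be forced to grow with $t$ rather than stay bounded by $4s(t+1)$; alternatively, a small $\alpha\geq 1$ would require the excess $\sum(2s-i)q_i$ to localize in a way that violates (\ref{primal-eqn}) once plugged back into the valency estimates for $\G(\infty)$.

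I expect the hard part to be pinning down $\alpha$ exactly rather than merely bounding it, since Lemma~\ref{sum-of-cliques-pairs} already secures $\alpha \in \{0,1,2,3,4\}$ with relatively soft inequalities; ruling out $\alpha \in \{1,2,3,4\}$ needs a genuinely sharper constraint. The most delicate step will be converting the local information (the valency distribution in a single $\G(\infty)$, controlled by~(\ref{primal-eqn})) into a global statement about the whole line-graph structure, because I must ensure that the error terms accumulated over all $\sim 2t+2$ lines and all $\sim (t+1)^2$ vertices do not swamp the main term of size $\alpha s(t+1)$. I anticipate needing the hypothesis $t \geq 11(s+1)^3(s+2)$ in its full strength precisely here: the polynomial-in-$s$ lower bound on $t$ is exactly what guarantees that the bounded combinatorial defects (each of size $O(s)$ per vertex, as in the bounds $\ell \leq 5s$ and $m \leq 2s$) remain negligible against the linear-in-$t$ main term, closing the argument and forcing $\alpha=0$, hence exactly $2t+2$ lines.
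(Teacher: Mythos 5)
Your reduction to showing $\alpha=0$ is the right frame, and your linear bookkeeping (the identity (\ref{eqz3}), the bounds $m\leq 2s$ and $\ell+m=s$) matches the paper's setup. But the engine you propose for ruling out $\alpha\geq1$ --- that a single line of order $<s(t+1)$ ``propagates'' its deficiency to every line it meets, so that the total deficiency grows with $t$ and overshoots $\alpha s(t+1)\leq 4s(t+1)$ --- is false, and the failure is visible from Lemma \ref{c1+c2=2m+2st} itself. Writing $d_C:=s(t+1)-|V(C)|$ for the deficiency of a line $C$, that lemma says exactly $d_C+d_{C'}=2(s-m)$ for intersecting lines, so a deficient line can perfectly well meet full lines (in $m<s$ vertices), and the deficiency can spread out evenly rather than accumulate: the configuration with $\delta=2t+3$ lines, every line of order $st+\frac{s}{2}$ (so $d_C=\frac{s}{2}$), and every intersection of size $m=\frac{s}{2}$ is consistent with $d_C+d_{C'}=2(s-m)$, gives each line exactly $2t+1$ intersecting partners covering its vertices, and has total deficiency $(2t+3)\frac{s}{2}\approx s(t+1)$, i.e., exactly the $\alpha=1$ budget. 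Your proposed checks do not exclude it: in this configuration every vertex has $m_x=\ell_x=\frac{s}{2}$, and the valency distribution in $\G(\infty)$ satisfies (\ref{primal-eqn}) up to error terms of order $O(s^3t)$, which is negligible against the main term $s^2t^2(s-1)$ --- precisely the kind of slack you acknowledge in your last paragraph. So no magnitude estimate of the type you describe can close the argument; in effect you re-derive the content of Lemma \ref{sum-of-cliques-pairs} without getting past it.

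What actually kills $\alpha\geq1$ in the paper is an exact integrality-and-parity endgame that is absent from your proposal. One fixes a line $C$ of \emph{minimal} order $s(t-1)+h$, double counts its vertices via the intersection sizes $\frac{h+i-2s}{2}$ coming from Lemma \ref{c1+c2=2m+2st}, and plays this off against (\ref{eqz3}); the two squeezes force $h=\frac{3}{2}s$ exactly, whence $\alpha=1$, $\delta=2t+3$, then $\tau=2t+1$ for the number of lines meeting $C$, and finally that \emph{all} lines have the same order. The contradiction is then structural, not quantitative: the intersection graph of the lines would be a $(2t+1)$-regular graph on $2t+3$ vertices --- equivalently, the non-intersecting pairs would form a perfect matching on an odd number of lines --- which is impossible by parity. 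Your own remark that ``pinning down $\alpha$ exactly'' is the hard part is accurate, but your proposal contains no substitute for this rigidity-plus-parity step, and the mechanisms you do offer provably cannot supply it. One incidental error: the number of vertices is $n=s(t+1)^2$, not $s^2(t+1)^2+st^2$, as one sees by summing the multiplicities in (\ref{sclique-grid-spectrum}).
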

\begin{proof}
In Lemma \ref{sum-of-cliques-pairs}, we have seen that the number $\delta$ of lines satisfies $2t+2 \leq \delta \leq 2t +6$.
Now we will assume that $\delta \geq 2t+3$, in order to obtain a contradiction.  We will show this contradiction in a number of claims.
In order to obtain this contradiction, we will look at the neighborhood of a line in $\G$.\\

Note that the condition $t\geq11(s+1)^3(s+2)$ implies $t \geq 5s \geq 10$, which we will use several times below.
As before, let $q_{i}$ be the number of lines of order $s(t-1)+i$ in $\G$, where $i=1,\ldots,2s$.
Let $h$ be minimal such that $q_{h}\neq0$. Fix a line $C$ with exactly $s(t-1)+h$ vertices.
Let $q'_i:=q'_{i}(C)$ be the number of lines $C'$ with $s(t-1)+i$ vertices that intersect $C$
in at least one vertex. Let $\tau$ be the number of lines that intersect $C$ in at least one vertex. Note that by Lemma \ref{c1+c2=2m+2st}, we obtain that
\begin{equation}\label{primal1}
|V(C)\bigcap V(C')|=\frac{h+i-2s}{2}.
\end{equation}
This means that $q'_i =0 $ if $i < \max\{ h, 2s+2 - h\}$.
We first give the following claim concerning the numbers $\tau$, $q_{i}$ and $q'_{i}$ ($i=h,\ldots,2s$).
\begin{claim}\label{claim1}
The following hold:
\begin{itemize}
   \item[\emph{(i)}] $q'_i =0 $ if $i < \max\{h, 2s+2 - h\}$,
   \item[\emph{(ii)}] $q'_{i}\leq q_{i}$ if $i>h$,
  \item[\emph{(iii)}] $q'_{i}\leq q_{i}-1$ if $i=h$,
  \item[\emph{(iv)}] $\tau=\sum\limits_{i=h}^{2s}q'_{i}\leq\bigg(\sum\limits_{i=h}^{2s}q_{i}\bigg)-1\leq2t+5$,
  \item[\emph{(v)}] $\sum\limits_{i=\max\{h, 2s+2 - h\}}^{2s}q'_{i}\bigg(\frac{h+i-2s}{2}\bigg)=s(t-1)+h$.
\end{itemize}
\end{claim}
\noindent{\bf Proof of Claim \ref{claim1}}.
(i)-(iii) follow easily from the definitions of $q'_i$, $h$ and  $(\ref{primal1})$.
(iv) follows immediately from (i)-(iii).
To show (v), recall that every vertex of $\G$ lies in exactly two lines so this, in particular,
holds for the vertices of $C$. By using (\ref{primal1}) and (i), we obtain (v).
\qed
\\

Next we show that $h\geq s$.
\begin{claim}\label{myclaim0}
$h\geq s$.
\end{claim}
\noindent{\bf Proof of Claim \ref{myclaim0}.}
By Claim \ref{claim1}(ii), (iii) and (v), we have
\begin{eqnarray*}
s(t-1)+h&=&\sum\limits_{i=\max\{h, 2s+2 - h\}}^{2s}q'_{i}\bigg(\frac{h+i-2s}{2}\bigg)\\
&\leq&\sum\limits_{i=h}^{2s}q'_{i}\bigg(\frac{h}{2}\bigg)\\
&<&\sum\limits_{i=h}^{2s}q_{i}\bigg(\frac{h}{2}\bigg)=\frac{h}{2}\delta.\end{eqnarray*}
By $\delta\leq2t+6$, we obtain
\begin{equation}\label{myeqn0}
(t+2)h = (2t+4)\frac{h}{2}\geq s(t-1).
\end{equation}
Since we have $t\geq5s \geq 10$, (\ref{myeqn0}) implies that $h>s-\frac{1}{2}$. This shows the claim.
\qed
\\

Note that Claim \ref{myclaim0} shows that $\frac{h+i-2s}{2} \geq 0$ for all $i \geq h$. Using this, we show that $h \geq \frac{3}{2}s$.
\begin{claim}\label{myclaim1}
$h\geq \lceil\frac{3}{2}s\rceil$.
\end{claim}
\noindent{\bf Proof of Claim \ref{myclaim1}.}
By Claim \ref{claim1} we have
\begin{equation}\label{myeqn1}
\sum\limits_{i=h}^{2s}q_{i}\bigg(\frac{h+i-2s}{2}\bigg)\geq\sum\limits_{i=h}^{2s}q'_{i}\bigg(\frac{h+i-2s}{2}\bigg)=\sum\limits_{i=\max\{h, 2s+2 - h\}}^{2s}q'_{i}\bigg(\frac{h+i-2s}{2}\bigg)=s(t-1)+h.
\end{equation}
Adding twice (\ref{myeqn1}) to (\ref{eqz3}), we obtain
\begin{equation*}
h\delta=h\sum\limits_{i=h}^{2s}q_{i}\geq 2s(t-1)+2h+\alpha s(t+1).
\end{equation*}
As $2t+3 \leq \delta\leq2t+6$, we have
\begin{equation}\label{myeqn3}
h(2t+4)\geq 2s(t-1)+\alpha s(t+1)\geq3st-s.
\end{equation}
This implies that
\begin{equation*}
h\geq \frac{3st-s}{2t+4}=s\bigg(\frac{3t-1}{2t+4}\bigg).
\end{equation*}
Since $t\geq5s\geq 10$, we obtain that $h\geq \lceil\frac{3}{2}s\rceil$.
This shows the claim.
\qed
\\

The following claim gives an upper bound on $h$.
\begin{claim}\label{myclaim2}
$h\leq\lfloor\frac{3}{2}s\rfloor$.
\end{claim}
\noindent{\bf Proof of Claim \ref{myclaim2}.}
As $ \alpha \geq 1$ and  $\delta\leq2t+6$,   we obtain, by (\ref{eqz3}), the following
\begin{equation*}
(2s-h)(2t+6)\geq(2s-h)\sum\limits_{i=h}^{2s}q_{i}\geq s(t+1).
\end{equation*}
This implies that
\begin{equation*}
2s-h\geq\frac{s(t+1)}{2t+6}=\frac{1}{2}s-\frac{2s}{2t+6}.
\end{equation*}
Since $t\geq5s$, it follows that $h\leq\lfloor\frac{3}{2}s\rfloor$.
This shows the claim.
\qed
\\

Claims \ref{myclaim0}, \ref{myclaim1} and \ref{myclaim2} imply the following claim.
\begin{claim}\label{myclaim3}
$h=\frac{3}{2}s$, $\alpha=1$ and $\delta = 2t+3$.
\end{claim}
\noindent{\bf Proof of Claim \ref{myclaim3}.}
By Claims \ref{myclaim1} and \ref{myclaim2}, we obtain $h=\frac{3}{2}s$.
As $h=\frac{3}{2}s$, (\ref{myeqn3}) becomes
\begin{equation*}
\frac{3}{2}s(2t+4)\geq2s(t-1)+\alpha s(t+1).
\end{equation*}
Now, by $ t \geq  5s \geq 10$, we obtain  $\alpha<2$, which implies $\alpha=1$.
This also shows that $\delta = 2t+3.$ This shows the claim.
\qed
\\

Next we show that all lines have order $st + \frac{s}{2}$ and the number $\tau$ of lines that intersect $C$ is equal to  $2t+1$.
\begin{claim}\label{myclaim4}
We have  $\tau = 2t+1$.
\end{claim}
\noindent{\bf Proof of Claim \ref{myclaim4}.}
By Claim \ref{myclaim3} we have $h = \frac{3}{2} s$ and $\delta = 2t+3$. As $\alpha =1$, Eq.  (\ref{eqz3}) gives us
\begin{equation}\label{myeqn5}
\sum\limits_{i=h}^{2s}q_{i}(2s-i)= s(t+1).
\end{equation}
Inserting $h=\frac{3}{2}s$ in Claim \ref{claim1} (v), we obtain
\begin{eqnarray}\label{myeqn6}
\nonumber \sum\limits_{i=h}^{2s}q'_{i}(h-s) &\leq& \sum\limits_{i=h}^{2s}q'_{i}\bigg(\frac{h+i-2s}{2}\bigg)\\
\nonumber &=& s(t-1)+\frac{3}{2}s,\\
&=& s\big(t+\frac{1}{2}\big)
\end{eqnarray}

\noindent
As $\tau=\sum\limits_{i=h}^{2s}q'_{i}$ and $h = \frac{3}{2}s$, we find
\begin{equation}\label{jack}
\frac{1}{2}\tau s= \tau(h-s) \geq s\big(t+\frac{1}{2}\big),
\end{equation}
which implies that
\begin{equation}\label{myeqn7}
\tau\leq2t+1,
\end{equation}
holds.

On the other side, by Claim \ref{claim1} (v) and $h = \frac{3}{2} s$, we have
\begin{equation}\label{myeqn8}
\sum\limits_{i=h}^{2s}q_{i}\bigg(\frac{h+i-2s}{2}\bigg)\geq\sum\limits_{i=h}^{2s}q'_{i}\bigg(\frac{h+i-2s}{2}\bigg)+(\delta-\tau)(h-s)= s\big(t+\frac{1}{2}\big)+(\delta-\tau)\frac{1}{2}s.
\end{equation}
By adding twice (\ref{myeqn8}) to (\ref{myeqn5}), we obtain
\begin{equation}\label{myeqn9}
h\delta=h\sum\limits_{i=h}^{2s}q_{i}\geq2s\big(t+\frac{1}{2}\big)+s(t+1)+(\delta-\tau)s,
\end{equation}
which implies
\begin{equation}\label{myeqn10}
2t+1 - \tau = \delta-\tau\leq\frac{5}{2},
\end{equation}
as $\delta = 2t+3$.
Combining (\ref{myeqn10}) and (\ref{myeqn7}), we find
\begin{equation*}
\tau=2t+1.
\end{equation*}
This shows the claim.
\qed
\\

\noindent As we have equality in (\ref{myeqn7}), and consequently,  in (\ref{jack}) and in (\ref{myeqn6}), we obtain
\begin{equation*}
q'_{i}=0,~(i>h)~~\textrm{and}~~q'_{h}=2t+1.
\end{equation*}

In order to finish the proof of the proposition, we construct a graph $\G'$ whose vertices are the lines in $\G$ where
two lines are adjacent if they intersect in at least one vertex in $\G$.
Let $C'$ be the line that does not intersect $C$. As every vertex of $\G$ lies in two lines
this implies that $C'$ has a common neighbor $C''$ with $C$ in $\G'$. Now replacing $C$
by $C''$, we see that $C'$ has also $s(t-1)+h$ vertices. This means all lines of $\G$ have exactly $s(t-1)+h$ vertices. Hence $\G'$ is a $(2t+1)$-regular graph on $2t+3$ vertices and this  is clearly not possible.
So we obtain a contradiction and this finishes the proof of the proposition.
\end{proof}

\section{Proof of the main result}

In this section we show our main result, namely Theorem \ref{main-result}.\\

\noindent{\bf Proof of Theorem \ref{main-result}.}
Assume $t\geq11(s+1)^3(s+2)$. By Propositions \ref{prop1} and \ref{propneigh} and Lemma \ref{sum-of-cliques-pairs}, we find that there are exactly $2t+2$ lines, each of order $s(t+1)$, and
every vertex $x$ in $\G$ lies on exactly two lines.
Moreover, by Lemma \ref{c1+c2=2m+2st}, the two lines through any vertex $x$ have exactly $s$ vertices in common, and every neighbor of $x$ lies in one of the two lines through $x$.
Now, consider the following equivalence relation
$\mathcal{R}$ on the vertex set $V(\G)$:
\begin{center}
$x\mathcal{R}x'$ if and only if $\{x\}\cup N_{\G}(x)=\{x'\}\cup N_{\G}(x')$, where $x,x'\in V(\G)$.
\end{center}

Every equivalence class under $\mathcal{R}$ contains $s$ vertices and it is the intersection of two lines.
Let us define the graph $\widehat{\G}$ whose vertices are the equivalent classes and two classes, say $S_1$ and $S_2$, are adjacent in $\widehat{\G}$
if and only if any vertex in $S_1$ is adjacent to any vertex in $S_2$. Then $\widehat{\G}$ is a regular graph with valency $2t$,
and $\G$ is the $s$-clique extension of $\widehat{\G}$. Note that the spectrum of $\widehat{\G}$ is equal to
\begin{equation*}
\{(2t)^{1},(t-1)^{2t},(-2)^{t^{2}}\},
\end{equation*}
by the relation of the spectra of $\G$ and $\widehat{\G}$, see
(\ref{G-spectrum}) and (\ref{sclique-spectrum}).
Since $\widehat{\G}$ is a connected regular graph with valency $2t$, and since it has exactly three
distinct eigenvalues, it follows that $\widehat{\G}$ is a strongly regular graph with parameters $\big((t+1)^{2},2t,t-1,2\big)$.\\

From \cite{S59}, it follows that a graph with these parameters is the $(t+1)\times(t+1)$-grid or $t=3$ and the graph is the Shrikhande graph.
But the Shrikhande graph is not possible as $\widehat{\G}$ has cliques of order $t+1$. This completes the proof.
\qed

\section*{Acknowledgements}
Sakander Hayat is supported by a CAS-TWAS president's fellowship at USTC China.
Jack Koolen is supported by the National Natural Science Foundation of China (Nos. 11471009 and 11671376).

\end{document}